\newtheorem{Theorem}{Theorem}[section]
\newtheorem{Lemma}[Theorem]{Lemma}
\newtheorem{Corollary}[Theorem]{Corollary}
\theoremstyle{definition}
\newtheorem{definition}[Theorem]{Definition}
\theoremstyle{remark}
\definecolor{dkgreen}{rgb}{0,.7,.1}
\definecolor{purple}{rgb}{.5,0,.6}
\newcommand{\startalign}{\setcounter{equation}{0}\begin{align}}
\begin{document}

\sloppy

\title{Recognising Graphic and Matroidal Connectivity Functions}

\author{Nathan Bowler \and Susan Jowett}
\address{School of Mathematics Statistics and Operations Research,
Victoria University of Wellington}
\email{swoppit@gmail.com}
\thanks{Susan Jowett's research was supported by an MSc scholarship from Victoria University of Wellington.}

\subjclass{05B35}

\date{}

\begin{abstract}
A {\em connectivity function} on a set $E$ is a function $\lambda:2^E\rightarrow \mathbb R$
such that $\lambda(\emptyset)=0$, that $\lambda(X)=\lambda(E-X)$ for all $X\subseteq E$, and
that $\lambda(X\cap Y)+\lambda(X\cup Y)\leq \lambda(X)+\lambda(Y)$ for all 
$X,Y \subseteq E$. Graphs, matroids and, more generally, polymatroids have associated
connectivity functions. In this paper we give a method for identifying when a connectivity function comes from a graph. This method uses no more than a polynomial number of evaluations of the connectivity function. In contrast, we show that the problem of identifying when a connectivity function comes from a matroid cannot be solved in polynomial time. We also show that the problem of identifying when a connectivity function is not that of a matroid cannot be solved in polynomial time.
\end{abstract}

\maketitle

\section{Introduction}
A {\em connectivity function} on a set $E$ is a function $\lambda:2^E\rightarrow \mathbb R$
such that $\lambda(\emptyset)=0$, that $\lambda(X)=\lambda(E-X)$ for all $X\subseteq E$, and
that $\lambda(X\cap Y)+\lambda(X\cup Y)\leq \lambda(X)+\lambda(Y)$ for all 
$X,Y \subseteq E$. A number of mathematical structures such as graphs, matroids and polymatroids have associated connectivity functions.

A particularly natural class of connectivity functions are {\em graphic connectivity functions}, that is connectivity functions that are the connectivity functions of graphs. Our first theorem is as follows:

\begin{Theorem}\label{main theorem} There is a polynomial $p$ such that, given an arbitrary connectivity function $\lambda$, we are able to establish whether or not $\lambda$ is the connectivity function of a graph with $n$ edges in at most $p(n)$ evaluations of the connectivity function.
\end{Theorem}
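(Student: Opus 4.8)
The plan is to decide graphicness by trying to \emph{reconstruct} a graph $G$ with $\lambda_G=\lambda$ from a polynomial amount of ``local'' data about $\lambda$, reporting failure as soon as that data is inconsistent with every graph. Recall that $\lambda_G(X)=r(X)+r(E\setminus X)-r(G)$, where $r$ is the rank function of the cycle matroid $M(G)$, so $\lambda_G=\lambda_{M(G)}$; thus what we are really trying to pin down is $M(G)$, which (when $3$-connected) determines $G$ uniquely by Whitney's $2$-isomorphism theorem. I would start with the cheap necessary conditions — $\lambda$ must be integer-valued and non-negative on all sets of bounded size, with $0\le\lambda(\{e\})\le 1$ for every $e$ — and I would detect and delete ``loops'', elements $e$ for which $\lambda(X\cup\{e\})=\lambda(X)$ on a suitable polynomial family of test sets, since such an $e$ contributes nothing to any realising graph.

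Next I would strip off the low-order separations. A non-trivial set $X$ with $\lambda(X)\le 1$ is a $1$- or $2$-separation, corresponding in any realising graph to a cut vertex or a $2$-vertex cut, and such separations organise the graph into the classical tree of $3$-blocks: $3$-connected graphs, cycles, and bonds (classes of parallel edges). Using Queyranne's algorithm for minimising symmetric submodular functions — $\lambda$ is symmetric and submodular — I can find a minimum-value non-trivial separation with only polynomially many evaluations. If that minimum is at least $2$ we are in the essentially $3$-connected case, treated below; if it is at most $1$ I split $\lambda$ into two connectivity functions on strictly smaller ground sets (each evaluable with $O(1)$ calls to $\lambda$, using the standard behaviour of a connectivity function across a separation together with a virtual edge), recurse on the pieces, and finally check that the pieces can be glued into a single graph, recognising cycles and bonds outright. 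Because each split strictly shrinks the ground set, the total number of evaluations stays polynomial.

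It remains to decide, for a $\lambda$ with no non-trivial separation of value $\le 1$, whether $\lambda=\lambda_G$ for some (necessarily unique) $3$-connected graph $G$. Here I would spend $O(n^{c})$ evaluations computing $\lambda$ on every subset of size at most a fixed constant $c$, and from the values on $2$-, $3$- and $c$-element sets read off the local incidence structure — which pairs of edges meet at a vertex, which triples form a triangle rather than a star, and so on — and try to assemble the unique candidate graph $G$ consistent with it; if no graph fits the local data, I report that $\lambda$ is not graphic.

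The crux, which I expect to be the main obstacle, is verification: we cannot afford to test $\lambda=\lambda_G$ on all $2^n$ sets, so we need a \emph{rigidity} statement — roughly, that a connectivity function agreeing with $\lambda_G$ on all sets of size at most $c$ must in fact equal $\lambda_G$, i.e.\ a $3$-connected graph is already determined by the restriction of its connectivity function to bounded-size sets. Granting this, verification collapses to the polynomially many small-set checks already made during reconstruction. Such rigidity is exactly what \emph{fails} for arbitrary matroids, and so is the real reason the graphic case can be decided in polynomial time while the matroidal case cannot; proving it — presumably by an inductive exchange argument along separations of a $3$-connected graph, showing how $\lambda$ on a large set is forced by $\lambda$ on its bounded restrictions — is the technical heart of the theorem, with the secondary difficulty being to ensure that the splitting in the second step is faithfully visible through this local data.
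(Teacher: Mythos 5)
There is a genuine gap, and it starts with the object you are analysing. The paper's connectivity function of a graph is the \emph{vertex} connectivity $\gamma_G(X)=|V(X)|+|V(E-X)|-|V(E)|=|\delta(X)|$, not the connectivity function of the cycle matroid: these disagree already on singletons, since $\gamma_G(\{e\})=2$ for any non-leaf, non-loop edge $e$, whereas the Tutte connectivity of a single non-loop, non-coloop element is $1$. So your opening necessary condition $0\le\lambda(\{e\})\le 1$ would reject essentially every graphic connectivity function in the paper's sense, and the whole cycle-matroid apparatus you invoke (Whitney $2$-isomorphism, the tree of $3$-blocks, splitting along $1$- and $2$-separations with virtual edges, Queyranne minimisation) is solving a different recognition problem. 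In the vertex-connectivity world reconstruction is much more direct: $e$ and $f$ are adjacent if and only if $\gamma(\{e,f\})<\gamma(\{e\})+\gamma(\{f\})$, so one reads off the line graph with $O(n^2)$ evaluations and builds the (essentially unique) candidate graph $G$ from it; no connectivity decomposition is needed.

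The more serious gap is the verification step, which you correctly identify as the crux but leave entirely as a conjectured ``rigidity'' statement; moreover the form you conjecture --- that agreement with $\gamma_G$ on all sets of size at most a constant $c$ forces equality --- is not what is needed and is not what the paper proves. The certifying family actually used consists of the \emph{$e$-controlled} sets: for an edge $e=uv$, these are unions of one \emph{controlled} subset from each of $S_u-\{e\}$, $S_v-\{e\}$ and $\{e\}$, where a controlled subset of a set $S$ means $\emptyset$, a singleton, or all of $S$. These sets are polynomially many but can be as large as the maximum degree, so they are not of bounded size. The argument that agreement on them suffices is a local exchange: given $X$ and $e\notin X$, one chooses a minimal $e$-controlled $Y\subseteq X$ recording, for each endvertex $v$ of $e$, whether $X$ meets $S_v$ and whether $X\cup\{e\}\supseteq S_v$; then $\gamma(X)+\gamma(Y\cup\{e\})=\gamma(X\cup\{e\})+\gamma(Y)$, and submodularity of $\lambda$ gives $\lambda(X\cup\{e\})-\lambda(X)\le\gamma(X\cup\{e\})-\gamma(X)$; applying the same bound to the complement and using symmetry turns the inequality into an equality, and induction on $|X|$ yields $\lambda=\gamma_G$. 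Without an argument of this kind (and with the bounded-size version unsubstantiated), your proposal does not close the verification step and hence does not prove the theorem.
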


Recognition problems like this are well studied in matroid theory, for example Seymour proved in \cite{recognising graphic matroids} that we can, in a polynomial number of rank evaluations, recognise when a matroid is graphic. He later showed in \cite{Seymour} that we can recognise, again in a polynomial number of rank evaluations, when a binary matroid is regular, that is, when it can be represented over every field. In \cite{Truemper}, Truemper gives a method for recognising when a matroid is regular in a polynomial number of rank evaluations.

The result of Seymour on recognising graphic matroids is in many ways analogous to our result for recognising graphic connectivity functions. Seymour proves that we can, in a polynomial number of evaluations of the rank function, recognise when a matroid is graphic, whereas we prove that we can, in a polynomial number of evaluations of the connectivity function, recognise when a connectivity function is graphic. Broadly speaking the structure of the two proofs is similar, although the details are very different. The proof of Seymour's result relies on building a graph, from the fixed matroid $M$, whose cycle matroid would be $M$, were $M$ the cycle matroid of a graph. This is where most of the difficulty lies, as we must  build, from the original matroid, $M$, a binary matroid $M'$ such that $M=M'$ if, and only if, $M$ is binary. We then check whether or not $M'$ is graphic, and, if it is not, then $M$ cannot be graphic. We then find a graph $G$ such that $M'= M(G)$. Checking whether or nor $M'$ is graphic relies on a result of Tutte \cite{Tutte}, which gives a method for determining when a binary matroid is graphic.
Checking that $M=M(G)$ is then fairly straightforward, all that is required is to check all complete stars of $G$, and, if these are all cocircuits of $M$, then $M=M(G)$.

To prove that we can, in a polynomial number of evaluations of the connectivity function, recognise whether a connectivity function $\lambda$ is a graphic connectivity function we first build a graph that would have $\lambda$ as its connectivity function were $\lambda$ graphic. This is fairly straightforward, as we can easily find the edge adjacencies such a graph would have to have were it to have connectivity function $\lambda$. From there it is not particularly difficult to build the graph with those edge adjacencies.  The second part of the proof involves checking that the connectivity function of the graph we just built is equal to $\lambda$. We must check considerably more sets than just the stars of the graph, although it turns out that these sets can be described very succinctly. 

Not all recognition problems can be solved in polynomial time. In \cite{recognising graphic matroids} Seymour proved that we cannot recognise binary matroids in a polynomial number or rank evaluations. More precisely, he showed that the number of rank evaluations needed to guarantee that a matroid on $n$ elements is binary grows superpolynomially in $n$; indeed, if $n$ is even then there is a binary matroid $M$ on $n$ elements such that for any set $A$ of less than $2^{n/2}$ subsets of the ground set there is a non-binary matroid whose rank function agrees with that of $M$ on those sets. In other words, the problem cannot be solved using only a polynomial number of calls to the rank oracle.

However, it can be shown that a matroid is non-binary in a polynomial number of calls to the rank oracle, and indeed we have the following very strong statement: for any non-binary matroid $M$ there is a set of 16 subsets of the ground set $E$ of $M$ such that no other matroid on $E$ which agrees with $M$ about the ranks of all those subsets can be binary. Indeed, since $M$ is not binary it must have a minor $N = M/P\backslash Q$ isomorphic to $U_{2,4}$. The rank function of this minor is given by $r_N(X) = r_M(P \cup X) - r_M(P)$. Thus for any other matroid $M'$ on $E$ whose rank function agrees with that of $M$ on all sets of the form $P \cup X$ with $X \subseteq E - P - Q$ we also have $M'/P\backslash Q \cong U_{2,4}$, and so $M'$ cannot be binary.

Our second main result is that the problem of identifying when a connectivity function comes from a matroid cannot generally be solved in a polynomial number of evaluations of the connectivity function. Similarly the problem of identifying when a connectivity function is not matroidal cannot generally be solved in a polynomial number of evaluations of the connectivity function. Thus the boundary between matroidal and non-matroidal connectivity functions is hard to describe, and in particular there is no characterisation corresponding to the characterisation of binary matroids by excluded minors. Like Seymour, we use spikes to generate the matroids used in the argument. However, a similar argument for the same result could also be given using sparse paving matroids instead.

Section 2 of this paper gives preliminary results that will be useful to the reader throughout the remainder of the paper. Section 3 gives details on how to build a graph from a graphic connectivity function. Section 4 proves that given a connectivity function and the results from Section 3, a connectivity function can be recognised as graphic in polynomial time. In Section 5  we show that the problem of identifying when a connectivity function comes from a matroid cannot be solved in polynomial time. We also show that the problem of identifying when a connectivity function is not that of a matroid cannot be solved in polynomial time.

\section{Preliminaries}
In this section we introduce connectivity functions, focusing particularly on graphic connectivity functions.
\begin{definition} 
Consider a set function $f$ on $E$. We say that $f$ is {\em normalised} if $f(\emptyset)=0$, that $f$ is {\em symmetric} if $f(X)=f(E-X)$ for all $X\subseteq E$, and that $f$ is {\em submodular} if $f(X)+f(Y)\geq f(X\cup Y)+f(X\cap Y)$ for all $X,Y\subseteq E$.
A normalised integer-valued set function $f$ on $E$ is {\em unitary} if $f(\{e\})\leq 1$ for all $e\in E$.
\end{definition}

\begin{definition}
A set function $\lambda:2^{|E|}\to \mathbb R$ is a {\em connectivity function} if the following hold:
\begin{enumerate}[$i$)]
\item $\lambda$ is normalised
\item $\lambda$ is symmetric
\item $\lambda$ is submodular.
\end{enumerate}
When $\lambda:2^{|E|}\to\mathbb R$, we say that $\lambda$ is {\em based} on the set $E$.
\end{definition}

Let $G$ be a graph with edge set $E$ and let $X\subseteq E$. We use $V(X)$ to denote the collection of vertices of $G$ that are incident with some edge in $X$.
We define the connectivity function of a graph as follows:

\begin{definition}\label{g conn fun} Let $G$ be a graph with vertex set $V$ and edge set $E$. The {\em connectivity function} of $G$, denoted $\gamma_G$, is defined by 
\[\gamma_G(X)=|V(X)|+|V(E-X)|-|V(E)|,\] for all $X\subseteq E$
\end{definition}

When it is clear from the context that the graph we are talking about is the graph $G$, we shall use $\gamma$ instead of $\gamma_G$.

It is easy to see that for $X\subseteq E(G)$, the connectivity of $X$, that is $\gamma(X)$, is equal to the number of vertices that $X$ and $E-X$ have in common. More formally, defining the {\em boundary} $\delta(X)$ of $X$ to be $V(X) \cap V(E-X)$, we have $\gamma(X) = |\delta(X)|$. 

The reader familiar with matroids should note that the connectivity function described above captures what is known as the {\em vertex connectivity} of the graph. It is not the connectivity function of the cycle matroid of the graph, although the two connectivity functions do have some similarities.

A proof that a graphic connectivity function is indeed a connectivity function can be found in \cite{Songbao's Thesis}.

From Definition~\ref{g conn fun} we see that the connectivity function of a graph, $G$, is based on the edge set, $E(G)$. From this it is clear that the presence of isolated vertices does not affect the connectivity function. Therefore we shall assume that our graphs do not contain isolated vertices.

\begin{definition}\label{m conn fun} Let $M$ be a matroid with groundset $E$ and rank function $r$. The {\em connectivity function} of $M$, denoted $\mu_M$, is defined by 
\[\mu_M(X)=r(X)+r(E-M)-r(M)\] for all $X\subseteq E$.
\end{definition}

If $M$ is a matroid on groundset $E$ with rank function $r$ then we use $r^*$ to denote the rank function in the dual matroid, $M^*$, and $r^*(X)=r(E-X)+|X|-r(M)$ for any $X\subseteq E-X$. It follows immediately that $\mu_M(X)=r(X)+r^*(X)-|X|$.

\section{Building a Graph from a Graphic Connectivity Function} \label{buildgraph}
We first give a method for finding the edge adjacencies of a graph from its connectivity function. We then discuss identically building the graph given its edge adjacencies.

Throughout this section, when we refer to a graph, $G$, we shall assume that $G$ has edge set $E$ and connectivity function $\gamma$, unless stated otherwise.

\begin{Lemma}\label{adjacencies}
Edges $e$ and $f$ of $G$ are adjacent if and only if $\gamma(\{e,f\}) < \gamma(\{e\}) + \gamma(\{f\})$.
\end{Lemma}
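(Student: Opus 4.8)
The plan is to compute $\gamma(\{e,f\})$, $\gamma(\{e\})$ and $\gamma(\{f\})$ directly from the definition $\gamma(X) = |V(X)| + |V(E-X)| - |V(E)|$, or rather from the cleaner description $\gamma(X) = |\delta(X)|$, where $\delta(X) = V(X) \cap V(E-X)$. For a single edge $e = uv$, the set $V(\{e\}) = \{u,v\}$, and so $\delta(\{e\})$ consists of exactly those endpoints of $e$ that are also incident with some edge other than $e$. Thus $\gamma(\{e\})$ is $2$, $1$ or $0$ according to how many of the two endpoints of $e$ meet another edge; in a graph with no isolated vertices and at least two edges, the only way to lose an endpoint is for it to have degree one. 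I would set up notation recording, for each endpoint of $e$ and of $f$, whether it is ``private'' (degree one) or ``shared'' (meets another edge).

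Next I would do a case analysis on whether $e$ and $f$ share a vertex. First suppose $e$ and $f$ are not adjacent, say $e = u_1 u_2$ and $f = w_1 w_2$ with all four vertices distinct. Then $V(\{e,f\}) = \{u_1,u_2,w_1,w_2\}$, and a vertex of $\{e,f\}$ lies in $\delta(\{e,f\})$ precisely when it is incident with some edge outside $\{e,f\}$ — but since $e$ and $f$ share no vertex, being incident with an edge outside $\{e,f\}$ is, for an endpoint of $e$, the same as being incident with an edge other than $e$. Hence $\delta(\{e,f\})$ splits as a disjoint union of the contribution from $e$'s endpoints and from $f$'s endpoints, giving $\gamma(\{e,f\}) = \gamma(\{e\}) + \gamma(\{f\})$. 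So in the non-adjacent case equality holds, and strict inequality fails.

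Now suppose $e$ and $f$ are adjacent at a common vertex $x$. The point is that $x \in V(\{e\})$ and $x \in V(E - \{e\})$ (since $f \neq e$ meets $x$), so $x$ contributes $1$ to $\gamma(\{e\})$; symmetrically $x$ contributes $1$ to $\gamma(\{f\})$; but $x$ need not lie in $\delta(\{e,f\})$, since removing both $e$ and $f$ may strip $x$ of all its incident edges. Comparing the three boundaries vertex by vertex: every vertex other than $x$ contributes the same amount to $\gamma(\{e,f\})$ as it does to the relevant one of $\gamma(\{e\}), \gamma(\{f\})$, by the same ``outside edge'' reasoning as before (for a vertex $y \neq x$ that is an endpoint only of $e$, incidence with an edge outside $\{e,f\}$ equals incidence with an edge $\neq e$), while $x$ itself contributes $2$ to $\gamma(\{e\}) + \gamma(\{f\})$ but either $0$ or $1$ to $\gamma(\{e,f\})$. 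Hence $\gamma(\{e,f\}) \le \gamma(\{e\}) + \gamma(\{f\}) - 1 < \gamma(\{e\}) + \gamma(\{f\})$, giving strict inequality. Combining the two cases yields the equivalence.

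The main obstacle, such as it is, will be keeping the bookkeeping honest in the adjacent case: one must be careful that a vertex incident with $e$, $f$, \emph{and} some third edge is handled correctly (it still contributes to all relevant boundaries), and that the possibility of $e$ and $f$ sharing \emph{both} endpoints (parallel edges) is covered — there $V(\{e,f\}) = \{x_1,x_2\}$ and one simply checks that each $x_i$ contributes $1$ to each of $\gamma(\{e\})$ and $\gamma(\{f\})$ but at most $1$ to $\gamma(\{e,f\})$, so the strict inequality still holds. A clean way to organise all of this is to define, for a vertex $v$ and a set $X$, the indicator $[v \in \delta(X)]$ and observe it equals $[v \in V(X)]\cdot[v \in V(E-X)]$, then sum over $v$; the whole proof becomes a comparison of three sums of products of indicators, with the single vertex $x$ (if $e,f$ adjacent) being the only place they differ.
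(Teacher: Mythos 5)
Your proof is correct and follows essentially the same route as the paper: a direct comparison of the boundaries $\delta(\{e\})$, $\delta(\{f\})$ and $\delta(\{e,f\})$, showing they decompose disjointly when $e$ and $f$ are non-adjacent and that a shared endpoint is double-counted in $\gamma(\{e\})+\gamma(\{f\})$ when they are adjacent. The only difference is cosmetic: you establish exact equality in the non-adjacent case and handle parallel edges explicitly, where the paper contents itself with the inequality $\gamma(\{e,f\})\geq\gamma(\{e\})+\gamma(\{f\})$.
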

\begin{proof}
Suppose first of all that $e$ and $f$ are not adjacent. Then any vertex $x$ in $\delta(\{e\})$ is incident with $e$ and some other edge, which cannot be $f$ since $e$ and $f$ are not adjacent. So $x$ is also in $\delta(\{e, f\})$. Furthermore $\delta(\{e\})$ and $\delta(\{f\})$ are clearly disjoint. Thus $\gamma(\{e,f\}) \geq |\delta(\{e\}) \cup \delta(\{f\})| = \gamma(\{e\}) + \gamma(\{f\})$.

Now suppose instead that $e$ and $f$ are adjacent. Then any element of the boundary of $\{e,f\}$ must be in the boundary of $e$ or of $f$, and at least one vertex is in both boundaries. Thus $\gamma(\{e,f\}) \leq |\delta(\{e\}) \cup \delta(\{f\})| < \gamma(\{e\}) + \gamma(\{f\})$.
\end{proof}

Lemma~\ref{adjacencies} enables us to identify, for every pair of edges $a$ and $b$, whether or not $a$ and $b$ are adjacent by evaluating $\gamma(\{a\})$, $\gamma(\{b\})$ and $\gamma(\{a,b\})$.

Whitney proved in \cite{Whitney} that a connected graph can, under most circumstances, be built up to isomorphism from its edge adjacencies. There exist many papers, for example \cite{Zelinka} and \cite{line graph}, which give methods for building graphs from the edge adjacencies, or equivalently from their line graphs, but these methods generally only guarantee that the graph is built up to isomorphism (although often they do almost always build the graph up to identity). A method that builds the graph up to edge labelling, where possible, from the edge adjacencies can be found in \cite{Thesis} and is based on \cite{Zelinka}. In future, we shall refer to building a graph up to identity when we mean up to edge labelling. In some cases it is not  possible to build the graph up to identity from the edge adjacencies; for example $K_4- e$ cannot be built up to identity from the edge adjacencies. However, the connectivity function provides more information than just the edge adjacencies, for example we can get information about 3-element sets from the connectivity function, and this sometimes enables us to build the graph up to identity from the connectivity function when we are not able to from the adjacencies alone. In fact, the only graph we cannot build up to identity given the connectivity function is $K_4$ (which can be identified but not built up to identity).

Using these methods we can not only reconstruct the graph from the connectivity function, we can do so with only polynomially many evaluations of that function.  Of course there are some connectivity functions for which the adjacency information gleaned above is not consistent with any graph, but this too can be checked with polynomially many evaluations. This reduces our problem to the following, which we address in the next section: given a graph $G$ with edge set $E$ and a connectivity function $\lambda$ on $E$, can we check whether $\lambda = \gamma_G$ with only polynomially many evaluations of $\lambda$?

\section{Comparing the Connectivity Functions}
For this section we fix a graph $G$ and a connectivity function $\lambda$. We denote the connectivity function of $G$ by $\gamma$. For a vertex $v$ of $G$ we let $S_v$ denote the set of edges of $E$ that are incident with $v$.
\begin{definition} Let $S$ be a set and $S'\subseteq S$. We say that $S'$ is {\em controlled} if one of the following holds:
\begin{enumerate}[$i)$]
\item $S'=S$,
\item $S'=\emptyset$, 
\item $|S'|=1$.
\end{enumerate}
Let $e=uv$ be an edge of $G=(V,E)$. We say that a set $Y\subseteq E$ is {\em$e$-controlled} if it is the union of three controlled subsets, one from each of $S_x-\{e\}$, $S_y-\{e\}$ and $\{e\}$
\end{definition}
Our aim in the rest of this section is to show that if for every edge $e$ we have $\lambda(Y) = \gamma(Y)$ for all $e$-controlled sets then $\lambda = \gamma$. Since there are only polynomially many $e$-controlled sets, this will then imply that only polynomially many evaluations of $\lambda$ are needed to check whether $\lambda = \gamma$. So for the remainder of this section we shall assume that $\lambda(Y)=\gamma(Y)$ for all $e$-controlled sets $Y$.

\begin{Lemma}For any $X\subseteq E$ and any $e\in E - X$ we have $$\lambda(X\cup\{e\})-\lambda(X)\leq \gamma(X\cup \{e\})-\gamma(X)\,.$$
\begin{proof}
Let $Y$ be a minimal subset of $X$ such that for each endvertex $v$ of $e$ the following hold:
\begin{enumerate}[{$i$)}]
\item If $X$ contains an one or more edges incident with $v$ then $Y$ contains exactly one edge incident with $v$, and
\item If $X\cup \{e\}$ contains all edges incident with $v$ then so does $Y\cup \{e\}$
\end{enumerate}
Note that these conditions tell us that $Y$ and $Y\cup \{e\}$ are both $e$-controlled. Also note that an endvertex of $e$ is in the boundary of $X$ if, and only if, it is in the boundary of $Y$ and is in the the boundary of $X\cup \{e\}$ if, and only if, it is in the boundary of $Y\cup\{e\}$. On the other hand, every other vertex is in the boundary of $X$ if, and only if, it is in the boundary of $X\cup \{e\}$ and is in the boundary of $Y$ if, and only if, it is in the boundary of $Y\cup \{e\}$.

These observations imply that $\gamma(X)+\gamma(Y\cup\{e\})=\gamma(X\cup\{e\})+\gamma(Y)$.
Therefore :
\begin{align*}\lambda(X\cup \{e\})-\lambda(X)&\leq \lambda(Y\cup\{e\})-\lambda(Y)\\
&=\gamma(Y\cup\{e\})-\gamma(Y)\\
&=\gamma(X\cup \{e\})-\gamma(X)
\end{align*}
where the first inequality holds by submodularity and the first equality holds by the fact $Y$ and $Y\cup\{e\}$ are $e$-controlled.
\end{proof}
\end{Lemma}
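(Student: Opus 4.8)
The plan is to bound the marginal contribution of $e$ to $\lambda$ by its marginal contribution to $\gamma$, using submodularity of $\lambda$ to pass from $X$ to a small subset and then invoking the standing hypothesis that $\lambda$ and $\gamma$ agree on $e$-controlled sets. Write $e=uv$ and recall $\gamma(\cdot)=|\delta(\cdot)|$. First I would build a set $Y\subseteq X$ with $e\notin Y$ as follows: for each endvertex $w$ of $e$, if no edge of $X$ is incident with $w$ put $Y_w=\emptyset$; otherwise, if every edge incident with $w$ lies in $X\cup\{e\}$ put $Y_w=S_w-\{e\}$ (a nonempty subset of $X$ in this case), and if not let $Y_w$ be any single edge of $X$ incident with $w$; then set $Y=Y_u\cup Y_v$. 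Each $Y_w$ is all of, a singleton in, or empty in $S_w-\{e\}$, hence controlled, so both $Y$ (taking the $\{e\}$-part empty) and $Y\cup\{e\}$ (taking it $\{e\}$) are $e$-controlled.

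Next I would verify that $e$ has the same marginal effect on $\gamma$ at $Y$ as at $X$, i.e. $\gamma(X\cup\{e\})-\gamma(X)=\gamma(Y\cup\{e\})-\gamma(Y)$, equivalently $\gamma(X)+\gamma(Y\cup\{e\})=\gamma(X\cup\{e\})+\gamma(Y)$. Since $e$ is incident only with $u$ and $v$, adding $e$ to a set can only change whether $u$ or $v$ lies in its boundary; so it suffices to compare, for each endvertex $w$, the indicators $[w\in\delta(X)]$, $[w\in\delta(X\cup\{e\})]$, $[w\in\delta(Y)]$, $[w\in\delta(Y\cup\{e\})]$, noting that $w\in\delta(Z)$ precisely when $Z$ meets $S_w$ but does not contain all of $S_w$. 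A short case split on whether every edge at $w$ lies in $X\cup\{e\}$ and whether $X$ meets $S_w$ — which is exactly what the definition of $Y_w$ was designed for — shows $[w\in\delta(X)]=[w\in\delta(Y)]$ and $[w\in\delta(X\cup\{e\})]=[w\in\delta(Y\cup\{e\})]$, whence the two marginals agree.

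Finally I would combine these. Applying submodularity of $\lambda$ to $X$ and $Y\cup\{e\}$, whose union is $X\cup\{e\}$ and whose intersection is $Y$ (as $Y\subseteq X$ and $e\notin X$), gives $\lambda(X\cup\{e\})-\lambda(X)\leq \lambda(Y\cup\{e\})-\lambda(Y)$. Since $Y$ and $Y\cup\{e\}$ are $e$-controlled, the standing assumption yields $\lambda(Y)=\gamma(Y)$ and $\lambda(Y\cup\{e\})=\gamma(Y\cup\{e\})$, so the right-hand side equals $\gamma(Y\cup\{e\})-\gamma(Y)$, which equals $\gamma(X\cup\{e\})-\gamma(X)$ by the previous paragraph. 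This is the claimed inequality.

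The main obstacle is the middle step, and more precisely choosing $Y$ correctly: it must stay $e$-controlled together with $Y\cup\{e\}$ while still carrying enough information about $u$ and $v$ to pin down their boundary status in $X$ and in $X\cup\{e\}$. The delicate point is the interaction between being controlled (each $Y_w$ must be all, one, or none of $S_w-\{e\}$) and the saturation condition: once every edge at $w$ lies in $X\cup\{e\}$ one is forced to take $Y_w=S_w-\{e\}$, which is admissible only because a set can be controlled by being the whole ambient set; and one must check that the degenerate overlaps between the clauses defining $Y_w$ (for instance $\deg w=1$, or $X$ meeting $S_w$ in exactly one edge) produce no conflict. Everything else is routine.
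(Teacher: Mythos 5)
Your proof is correct and follows essentially the same route as the paper: replace $X$ by an $e$-controlled core $Y\subseteq X$ recording, for each endvertex of $e$, whether $X$ meets its star and whether $X\cup\{e\}$ saturates it, check that the $\gamma$-marginal of $e$ is unchanged, and combine submodularity of $\lambda$ with agreement on $e$-controlled sets. Your explicit three-way definition of $Y_w$ is just a concrete realisation of the paper's ``minimal $Y$ satisfying (i) and (ii)'' (and in fact handles the saturated high-degree case more cleanly than the paper's literal wording), so there is nothing further to add.
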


\begin{Lemma}For any $X\subseteq E$ and any $e\in E- X$ we have $$\lambda(X\cup\{e\})-\lambda(X)= \gamma(X\cup \{e\})-\gamma(X)\,.$$
\begin{proof}
By the previous lemma $\lambda(X\cup\{e\})-\lambda(X)\leq\gamma(X\cup\{e\})-\gamma(X)$, so it remains to prove the inequality in the opposite direction. Let $X'=E-(X\cup\{e\})$.
\begin{align*}
\lambda(X\cup\{e\}) - \lambda(X)&=-(\lambda(X'\cup \{e\})-\lambda(X))\\
&\geq -(\gamma(X'\cup\{e\})-\gamma(X'))\\
&=\gamma(X\cup\{e\}) - \gamma(X)
\end{align*}
Where the first and last lines follow by symmetry of connectivity functions and the second line follows from the previous lemma applied to $X'$.
\end{proof}
\end{Lemma}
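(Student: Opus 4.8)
The plan is to derive the reverse of the inequality in the previous lemma essentially for free, using nothing but the symmetry of connectivity functions, and then to combine the two inequalities. In particular I would \emph{not} redo any boundary or vertex counting: that work is already packaged in the previous lemma.

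First I would fix $X\subseteq E$ and $e\in E-X$ and pass to the complementary set $X'=E-(X\cup\{e\})$. The useful features of $X'$ are that $e\notin X'$, that $E-X'=X\cup\{e\}$, and that $E-(X'\cup\{e\})=X$; symmetry of $\lambda$ then gives $\lambda(X')=\lambda(X\cup\{e\})$ and $\lambda(X'\cup\{e\})=\lambda(X)$, so that
\[
\lambda(X\cup\{e\})-\lambda(X)=-\bigl(\lambda(X'\cup\{e\})-\lambda(X')\bigr).
\]
Next I would apply the previous lemma to the pair $(X',e)$, which bounds $\lambda(X'\cup\{e\})-\lambda(X')$ above by $\gamma(X'\cup\{e\})-\gamma(X')$; negating turns this into a lower bound on $\lambda(X\cup\{e\})-\lambda(X)$. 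Finally, since $\gamma$ is itself a connectivity function and hence symmetric, the same complement bookkeeping rewrites $-\bigl(\gamma(X'\cup\{e\})-\gamma(X')\bigr)$ as $\gamma(X\cup\{e\})-\gamma(X)$. Chaining these three steps gives $\lambda(X\cup\{e\})-\lambda(X)\ge\gamma(X\cup\{e\})-\gamma(X)$, and together with the previous lemma this is the claimed equality.

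I do not expect a genuine obstacle here: the previous lemma does all the combinatorial work, and this statement is merely the observation that a one-sided estimate on the effect of adjoining an edge must be two-sided once both functions are invariant under complementation. The only point requiring care is the elementary bookkeeping with complements, namely checking that $e\notin X'$ (so that the previous lemma really applies to $(X',e)$) and that $E-X'$ and $E-(X'\cup\{e\})$ are exactly $X\cup\{e\}$ and $X$. Once this lemma is available, the section's stated goal $\lambda=\gamma$ follows by telescoping the equality along a maximal chain $\emptyset\subseteq\dots\subseteq E$ and using $\lambda(\emptyset)=\gamma(\emptyset)=0$.
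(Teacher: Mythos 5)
Your proposal is correct and is essentially identical to the paper's own argument: both pass to the complement $X'=E-(X\cup\{e\})$, apply the previous lemma to $(X',e)$, and use symmetry of $\lambda$ and $\gamma$ to flip the inequality. Your version is, if anything, slightly more careful in spelling out the complement bookkeeping.
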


The required result then follows immediately by induction on $|X|$. That is, we have proved the following theorem:
\begin{Theorem} Let $\gamma$ be the connectivity function of a graph and let $\lambda$ be a connectivity function with the property that $\lambda(Y)=\gamma(Y)$ for all $Y$ that are e-controlled for some $e$, then $\lambda=\gamma$.
\end{Theorem}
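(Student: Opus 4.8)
The plan is to deduce the theorem from the two lemmas proved above by a routine induction on $|X|$, so essentially all the real work has already been done and what remains is bookkeeping. The engine is the second lemma, which says that $\lambda$ and $\gamma$ have the same increment whenever a single edge is adjoined to a set: $\lambda(X\cup\{e\})-\lambda(X)=\gamma(X\cup\{e\})-\gamma(X)$ for every $X\subseteq E$ and every $e\in E-X$.

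First I would dispose of the base case $X=\emptyset$. Both $\lambda$ and $\gamma$ are connectivity functions, hence normalised, so $\lambda(\emptyset)=0=\gamma(\emptyset)$; alternatively, $\emptyset$ is itself $e$-controlled (the union of three empty controlled subsets), so the hypothesis already gives the equality. For the inductive step, assume $\lambda(Z)=\gamma(Z)$ for every $Z\subseteq E$ with $|Z|<k$, fix $X\subseteq E$ with $|X|=k\geq 1$, choose any $e\in X$, and put $Z=X-\{e\}$. Since $|Z|=k-1$, the inductive hypothesis gives $\lambda(Z)=\gamma(Z)$; applying the second lemma to $Z$ and $e$ gives $\lambda(X)-\lambda(Z)=\gamma(X)-\gamma(Z)$, and combining these two facts yields $\lambda(X)=\gamma(X)$. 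This closes the induction, so $\lambda=\gamma$ on all of $2^E$.

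Because the two lemmas may be assumed, there is no genuine obstacle in this final step. The only place that required care is the first lemma, where one must choose the minimal $e$-controlled certificate set $Y\subseteq X$ correctly — it keeps exactly one edge at each endvertex of $e$ that $X$ meets, and stays full at an endvertex whenever $X\cup\{e\}$ is — and then verify the modular identity $\gamma(X)+\gamma(Y\cup\{e\})=\gamma(X\cup\{e\})+\gamma(Y)$ vertex by vertex: the endvertices of $e$ contribute equally to both sides by the defining properties of $Y$, and every other vertex lies in the boundary of a set if and only if it lies in the boundary of that set with $e$ adjoined, so its contribution cancels. Submodularity of $\lambda$ then transports the increment from $X$ down to $Y$, the hypothesis $\lambda=\gamma$ on $e$-controlled sets finishes the inequality, and symmetry (applied to $E-(X\cup\{e\})$) upgrades it to the equality of the second lemma. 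If one wanted a self-contained argument, one would simply fold those two lemmas in front of the induction described above.
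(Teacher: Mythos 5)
Your proposal is correct and follows exactly the route the paper takes: the theorem is deduced from the two increment lemmas by induction on $|X|$, with the base case $\lambda(\emptyset)=0=\gamma(\emptyset)$ coming from normalisation (or from $\emptyset$ being $e$-controlled). Your summary of how the lemmas themselves are established also matches the paper's argument.
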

Combining this with the results of Section \ref{buildgraph} we have a proof of Theorem~\ref{main theorem}.

\section{Matroidal Connectivity Functions}

We have seen that we can, in a polynomial number of evaluations of the connectivity function, tell if a connectivity function is graphic. We now ask the same question for matroids. In this section we use spikes, a class of matroids that provide counterexamples to many natural conjectures, to show that matroidal connectivity functions cannot be recognised in a polynomial number of evaluations of the connectivity function, nor can we recognise when a connectivity function is not that of a matroid in a polynomial number of evaluations of the connectiviy function.

We fix disjoint sets $L_i = \{x_i, y_i\}$ for each positive integer $i$, which we call {\em legs}. We denote the union of the first $n$ legs by $E_n$.

\begin{definition}
Let $n$ be an integer greater than 2. A matroid $M$ with the following properties is a {\em rank-n spike} with legs $L_1, L_2, \ldots L_n$:
\begin{enumerate}
\item $E(M) =E_n$.
\item For all $k$ in $\{1,\ldots,n-1\}$, the union of any $k$ legs of $M$ has rank $k+1$.
\item $E_n$ has rank $n$.
\end{enumerate}
\end{definition}

 The next result is taken from \cite{Oxley}:\\
\begin{Theorem}\label{circuits of a spike} Let $M$ be a matroid with ground set $E_n$ and let $\mathcal C$ be the set of circuits of $M$. Then $M$ is a rank-$n$ spike with legs $L_1, L_2, \ldots L_n$ if and only if $\mathcal C$ is equal to $\mathcal C_1\cup\mathcal C_2\cup\mathcal C_3$ where $\mathcal C_1=\{\{x_i,y_i,x_j,y_j\}:1\leq i<j\leq n\}$, $\mathcal C_2$ is a, possibly empty, subset of $\{\{z_1,\ldots,z_n\}:z_i\in\{x_i,y_i\}\}$ such that no two members of $\mathcal C_2$ differ in exactly one element, and $\mathcal C_3$ is the collection of all $(n+1)$-element subsets of $E(M)$ that contain no member of $\mathcal C_1\cup\mathcal C_2$. 
\end{Theorem}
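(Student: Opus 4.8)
The plan is to route both implications through an explicit formula for the rank function of a rank-$n$ spike; once that formula is in hand, reading off the circuits is bookkeeping.

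I would dispatch the ``if'' direction first, since it is the easier one. Suppose $M$ is a matroid on $E_n$ whose circuit set is exactly $\mathcal C_1\cup\mathcal C_2\cup\mathcal C_3$. To get condition (3), observe that every $(n+1)$-element subset of $E_n$ either contains a member of $\mathcal C_1\cup\mathcal C_2$ or is itself a member of $\mathcal C_3$, hence is dependent, so $r(E_n)\le n$; and the $n$-element set consisting of one whole leg together with a single element from each remaining leg contains no member of $\mathcal C_1$ (it meets only one leg in two points), none of $\mathcal C_2$ (it is not a transversal) and none of $\mathcal C_3$ (it is too small), so it is independent and $r(E_n)=n$. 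For condition (2), given legs $L_{i_1},\dots,L_{i_k}$ with $k\le n-1$, the $(k+1)$-element set $\{x_{i_1},y_{i_1},x_{i_2},x_{i_3},\dots,x_{i_k}\}$ likewise contains no member of $\mathcal C_1\cup\mathcal C_2\cup\mathcal C_3$, so it is independent; and since $\{x_{i_1},y_{i_1},x_{i_j},y_{i_j}\}\in\mathcal C_1$ for each $j\ge 2$, each $y_{i_j}$ lies in its closure, so it spans $L_{i_1}\cup\dots\cup L_{i_k}$. Hence $r(L_{i_1}\cup\dots\cup L_{i_k})=k+1$ and $M$ is a rank-$n$ spike.

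For the ``only if'' direction, suppose $M$ is a rank-$n$ spike. The main obstacle, and the step I would attack first, is to show that each $L_i\cup L_j$ is a circuit. By condition (2) this set has four elements and rank $3$, so it is enough to rule out a dependent $3$-element subset, equivalently to show that no element lies in the closure of a leg other than its own (this also precludes parallel elements lying in different legs). I would argue by contradiction: assuming $x_j\in\Cl(L_i)$, one gets $\Cl(L_i\cup L_j)=\Cl(\{x_i,y_i,y_j\})$, and then, by splicing in further legs and tracking ranks, one exhibits a union of fewer than $n$ legs whose rank is strictly less than what condition (2) demands. This is the place where the hypotheses are used in full, exploiting that there are $n$ legs; I expect the small case $n=3$ to need separate attention, handled most cleanly by invoking the usual description of spikes through a tip element. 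Granting that every $L_i\cup L_j$ is a circuit, a short induction shows $\Cl(A)=\Cl\big(\bigcup\{L_i:L_i\cap A\neq\emptyset\}\big)$ for every $A$ that contains some whole leg, so by conditions (2) and (3) such an $A$ has rank $\min\{s+1,n\}$, where $s$ is the number of legs met by $A$. If $A$ contains no whole leg then $A$ is a partial transversal; adjoining the missing point of one of the legs it meets produces a set of the previous kind, and comparing ranks gives $r(A)=|A|$ whenever $A$ meets at most $n-1$ legs, while a full transversal has rank $n-1$ or $n$ and is a circuit precisely in the former case.

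Finally, with the rank function determined, the circuits are just the minimal dependent sets. A dependent set containing two whole legs contains some $L_i\cup L_j$, so a minimal one equals it; these are precisely $\mathcal C_1$. A minimal dependent set meeting each leg it hits in a single point cannot be a proper partial transversal (those are independent), so it is a full transversal that is a circuit — call the collection of these $\mathcal C_2$; no two members of $\mathcal C_2$ can differ in exactly one element, for otherwise strong circuit elimination applied to them would force a circuit inside an $n$-element set consisting of one whole leg and a single point from each of $n-1$ further legs, which the rank formula shows is independent. A minimal dependent set containing exactly one whole leg must, by the rank formula, have $n+1$ elements and meet every leg, and it contains no member of $\mathcal C_1\cup\mathcal C_2$ (having no proper dependent subset), so it lies in $\mathcal C_3$; conversely the rank formula shows every member of $\mathcal C_3$ is a circuit. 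Since this exhausts the possibilities for a circuit, $\mathcal C(M)=\mathcal C_1\cup\mathcal C_2\cup\mathcal C_3$, completing the argument.
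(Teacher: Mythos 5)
The paper does not actually prove this theorem --- it is quoted from Oxley with no argument --- so your proof has to stand on its own. Your ``if'' direction is essentially sound apart from one slip: the set ``one whole leg together with a single element from each remaining leg'' has $n+1$ elements, not $n$, and it \emph{contains} two transversals, so ``it is not a transversal'' does not rule out a member of $\mathcal C_2$ sitting inside it. Take instead one whole leg together with one element from each of $n-2$ of the remaining legs; this $n$-element set genuinely contains no member of $\mathcal C_1\cup\mathcal C_2\cup\mathcal C_3$ and witnesses $r(E_n)\geq n$. The closure argument for condition (2), the derivation of the rank function from the claim that each $L_i\cup L_j$ is a circuit, and the final sorting of circuits into $\mathcal C_1$, $\mathcal C_2$, $\mathcal C_3$ are all fine.

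The genuine gap is exactly the step you identify as ``the main obstacle'' and then defer: you never prove that each $L_i\cup L_j$ is a circuit, only promise that ``splicing in further legs and tracking ranks'' produces a contradiction. That promise cannot be kept from the definition of spike as stated in this paper, because the rank conditions do not exclude an element of one leg being parallel to an element of another. Concretely, let $M$ be $U_{3,5}$ on $\{a,b,c,d,e\}$ extended by an element $a'$ parallel to $a$, and put $x_1=a$, $y_1=b$, $x_2=a'$, $y_2=c$, $x_3=d$, $y_3=e$. Every leg has rank $2$, every union of two legs contains at least three points of $U_{3,5}$ and so has rank $3$, and $r(E_3)=3$; thus every clause of the stated definition of a rank-$3$ spike holds, yet $\{x_1,x_2\}$ is a $2$-element circuit lying in none of $\mathcal C_1,\mathcal C_2,\mathcal C_3$, and $L_1\cup L_2$ is dependent without being a circuit. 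This is not an artefact of $n=3$: for $n=4$ represent $x_1,y_1,x_2,y_2,x_3,y_3,x_4,y_4$ over $\mathbb R$ by $e_1,\,e_4,\,e_1,\,e_1{+}e_2{+}e_3{+}e_4,\,e_2,\,e_1{+}e_2,\,e_3,\,e_1{+}e_3$ and check that all leg-union ranks are as required while $x_1$ and $x_2$ are parallel. So the ``only if'' direction is false under the definition you (and the paper) are working from; your plan can only go through under a stronger definition of spike --- for instance Oxley's, under which each $L_i\cup L_j$ is required to be a circuit and a cocircuit, or the definition via deleting the tip from a tipped spike. The step you postponed is not a routine verification; it is precisely where the hypotheses run out.
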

When talking about spikes we shall use $z_i$ to describe a single element of $\{x_i,y_i\}$, and we shall refer to $\{z_1,\ldots,z_n\}$ as a {\em transversal} of a rank-$n$ spike. We call the set of such transversals $\mathcal T_n$.

The statement above can be seen as saying that spikes correspond to independent sets in the hypercube, in a sense which we now make precise. Let $H_n$ be the graph with vertex set $\mathcal T_n$ and with an edge joining 2 elements precisely when they differ in exactly one element. $H_n$ is isomorphic to the usual $n$-dimensional hypercube. If $I$ is an independent set in $H_n$ then the construction above with $\mathcal C_2 := I$ gives a rank-$n$ spike $S(I)$ and every rank-$n$ spike arises in this way. 

The rank functions of such spikes are very easy to calculate. For a subset $X$ of $E$, we define $l(X)$ to be the number of legs of the spike which $X$ meets. Then it is straightforward to check that the rank of $X$ in $S(I)$ is given as follows:
\begin{itemize}
\item If $X$ doesn't include any leg and is disjoint from some leg then it has rank $l(X) = |X|$.
\item If $X$ includes some leg and is disjoint from some other leg then it has rank $l(X) + 1$.
\item If $X$ includes some leg and meets all legs of the spike then it has rank $l(X) = n$.
\item if $X$ doesn't include any leg but meets all legs of the spike then it is a transversal. In this case it has rank $n-1$ if $X \in I$ and $n$ otherwise. 
\end{itemize}

In particular, only the ranks of transversals depend on $I$. We let $r_n$ be the function from $2^{E_n} - \mathcal T_n$ to $\mathbb N$ given by the restriction of the rank function of any rank-$n$ spike to this set. Similarly we define $\lambda_n$ to be the function from $2^{E_n} - \mathcal T_n$ to $\mathbb N$ sending $X$ to $r_n(X) + r_n({E_n} - X) - n$. Thus $\lambda_n$ is given by the restriction of the connectivity function of any rank-$n$ spike to $2^{E_n} - \mathcal T_n$. On $\mathcal T_n$, the connectivity function of a spike $S(I)$ is given by $\lambda_{S(I)}(X) = n - |I \cap \{X, {E_n} - X\}|$. We say that a function $\lambda \colon 2^{E_n} \to \mathbb N$ is {\em spiky} if it is symmetric, extends $\lambda_n$, takes values in the range $\{n-2, n-1, n\}$ on $\mathcal T_n$, and satisfies $\lambda(X) + \lambda(Y) \geq 2n-2$ for any transversals $X$ and $Y$ which differ in just one element. 

\begin{Lemma}
Any spiky function $\lambda \colon 2^{E_n} \to \mathbb N$ is a connectivity function.
\end{Lemma}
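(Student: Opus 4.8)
The plan is to check the three defining properties of a connectivity function for a spiky $\lambda$. Normalisation is immediate, since $\emptyset$ is not a transversal and so $\lambda(\emptyset)=\lambda_n(\emptyset)=r_n(\emptyset)+r_n(E_n)-n=0+n-n=0$; symmetry is built into the definition of \emph{spiky}. All the work is in submodularity: fixing $X,Y\subseteq E_n$, we must show $\lambda(X)+\lambda(Y)\geq\lambda(X\cup Y)+\lambda(X\cap Y)$. The idea is, for each such pair, to exhibit an \emph{actual} rank-$n$ spike $S(I)$ whose connectivity function $\lambda_{S(I)}$ agrees with $\lambda$ at all four of $X$, $Y$, $X\cup Y$, $X\cap Y$; since $\lambda_{S(I)}$ is the connectivity function of a matroid it is submodular, and the inequality for $\lambda$ follows at once.

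What makes this feasible is that $\lambda_{S(I)}$ agrees with $\lambda$ on \emph{every} non-transversal no matter what $I$ is (both restrict to $\lambda_n$ there), so only those of the four sets which happen to be transversals need attention. First I would record the elementary fact that two distinct transversals are incomparable (they have the same size $n$); consequently, if two of the four sets that lie in a containment relation --- for instance $X$ and $X\cap Y$, or $X\cap Y$ and $X\cup Y$ --- are both transversals, then they coincide and the submodular inequality is a triviality. So in the non-trivial situations at most two of the four sets are transversals, and if exactly two then they must be $X$ and $Y$ (and then $X\neq Y$, and $X\cap Y$, $X\cup Y$ are not transversals).

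Next I would build $I$ from the remaining spikiness conditions, using $\lambda_{S(I)}(T)=n-|I\cap\{T,E_n-T\}|$ for a transversal $T$ and the fact that $I$ may be taken to be any independent set of $H_n$ (equivalently, any set of transversals no two of which differ in one coordinate). If exactly one of the four sets is a transversal $T$: since $\lambda(T)\in\{n-2,n-1,n\}$, take $I\subseteq\{T,E_n-T\}$ of size $n-\lambda(T)\in\{0,1,2\}$; as $T$ and $E_n-T$ differ in all $n\geq 3$ coordinates this $I$ is independent, so $S(I)$ is a spike and $\lambda_{S(I)}(T)=\lambda(T)$. If $X$ and $Y$ are both transversals with $Y\notin\{X,E_n-X\}$ (the case $Y=E_n-X$ being trivial, as then $X\cup Y=E_n$ and $X\cap Y=\emptyset$), let $k\in\{1,\ldots,n-1\}$ be the number of legs where $X$ and $Y$ differ. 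I would pick $I\subseteq\{X,E_n-X,Y,E_n-Y\}$ with $|I\cap\{X,E_n-X\}|=n-\lambda(X)$ and $|I\cap\{Y,E_n-Y\}|=n-\lambda(Y)$. Among these four transversals the only possible $H_n$-edges are $\{X,Y\}$ and $\{E_n-X,E_n-Y\}$ (when $k=1$), or $\{X,E_n-Y\}$ and $\{E_n-X,Y\}$ (when $k=n-1$), and none when $2\leq k\leq n-2$. In the edgeless range any choice of the two cardinalities is fine; when $k=1$ or $k=n-1$ the spikiness inequality applied to the adjacent pair gives $(n-\lambda(X))+(n-\lambda(Y))\leq 2$, which --- by a short check on the at most three possibilities for $(n-\lambda(X),n-\lambda(Y))$ --- leaves room to choose the prescribed numbers of elements from $\{X,E_n-X\}$ and from $\{Y,E_n-Y\}$ while avoiding both forbidden edges. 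Thus $I$ is independent and $\lambda_{S(I)}$ agrees with $\lambda$ at $X$ and $Y$.

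In every case $\lambda_{S(I)}$ agrees with $\lambda$ at each of $X,Y,X\cup Y,X\cap Y$, so submodularity of $\lambda_{S(I)}$ gives
$$\lambda(X)+\lambda(Y)=\lambda_{S(I)}(X)+\lambda_{S(I)}(Y)\geq\lambda_{S(I)}(X\cup Y)+\lambda_{S(I)}(X\cap Y)=\lambda(X\cup Y)+\lambda(X\cap Y),$$
completing the proof. I expect the main obstacle to be the construction of $I$ when $X$ and $Y$ are adjacent or ``antipodally adjacent'' transversals ($k=1$ or $k=n-1$): this is exactly the situation for which the last clause of the definition of \emph{spiky} was included, and one has to make sure it really does permit $I$ to be placed inside the hypercube. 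Everything else is bookkeeping about which of the four sets can be a transversal.
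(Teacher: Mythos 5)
Your proof is correct, and it shares the paper's central idea -- dominate the inequality by the submodularity of the connectivity function of an \emph{actual} spike $S(I)$ -- but executes it differently. The paper only ever uses one-sided comparisons: for no transversal among $\{X,Y\}$ it compares with $S(\emptyset)$ (whose connectivity function is $\geq\lambda$ on transversals and equal elsewhere), for exactly one transversal it compares with $S(\{X,E_n-X\})$ (whose value $n-2$ at $X$ is $\leq\lambda(X)$), and in the hardest case, when both $X$ and $Y$ are transversals, it abandons spikes altogether and verifies the inequality by direct arithmetic, splitting on whether $X$ and $Y$ differ in one leg (where the last clause of the definition of spiky is invoked) or in at least two. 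You instead insist on \emph{exact} agreement of $\lambda_{S(I)}$ with $\lambda$ at all four sets $X$, $Y$, $X\cup Y$, $X\cap Y$, which forces you to do the extra combinatorial work of placing an independent set $I$ of size up to $4$ inside $H_n$ realising prescribed intersection sizes with $\{X,E_n-X\}$ and $\{Y,E_n-Y\}$; your case analysis there (edges occur only when the transversals differ in $1$ or $n-1$ legs, and the spiky inequality $\lambda(X)+\lambda(Y)\geq 2n-2$, applied via symmetry in the $n-1$ case, guarantees $|I|\leq 2$ so the forbidden edges can be dodged) is sound, as is your preliminary reduction showing that in any non-trivial instance at most two of the four sets are transversals and then only $X$ and $Y$ themselves. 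The payoff of your route is uniformity -- every case ends with the same one-line appeal to matroid submodularity -- at the cost of the hypercube placement argument; the paper's route avoids that argument but needs the explicit computation of $\lambda_n$ on unions and intersections of transversals in its final case.
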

\begin{proof}
$\lambda$ is normalised since it extends $\lambda_n$ and is symmetric by definition, so it suffices to show that it is submodular. So let $X, Y \subseteq E_n$. We must show that $\lambda(X) + \lambda(Y) \geq \lambda(X \cup Y) + \lambda(X \cap Y)$. If $X \subseteq Y$ or $Y \subseteq X$ then this is clear, so we may assume that this is not the case. There are now three cases, according to $|\{X, Y\} \cap \mathcal T_n|$.

If neither $X$ nor $Y$ is in $\mathcal T_n$ then we have
\begin{eqnarray*}
\lambda(X) + \lambda(Y) &=& \mu_{S(\emptyset)}(X) + \mu_{S(\emptyset)}(Y) \\&\geq& \mu_{S(\emptyset)}(X \cup Y) + \mu_{S(\emptyset)}(X \cap Y) \\&\geq& \lambda(X \cup Y) + \lambda(X \cap Y)\, .
\end{eqnarray*}

If just one of $X$ or $Y$, say $X$, is in $\mathcal T_n$ then we have
\begin{eqnarray*}
\lambda(X) + \lambda(Y) &\geq & n-2 + \lambda_n(Y) \\&=& \mu_{S(\{X, E_n - X\})}(X) + \mu_{S(\{X, E_n - X\})}(Y) \\&\geq& \mu_{S(\{X, E_n - X\})}(X \cup Y) + \mu_{S(\{X, E_n - X\})}(X \cap Y) \\&\geq& \lambda(X \cup Y) + \lambda(X \cap Y)\, .
\end{eqnarray*}

Finally, if both $X$ and $Y$ are in $\mathcal T_n$ then if they differ in just one point we have $\lambda(X) + \lambda(Y) \geq 2n-2 = \lambda(X\cap Y) + \lambda(X \cup Y)$ and otherwise we have
\begin{eqnarray*}
\lambda(X) + \lambda(Y) &\geq& (n-2) + (n-2) \\&\geq& |E_n - (X \cup Y)| + |X \cap Y| \\&=& \lambda(X \cup Y) + \lambda(X \cap Y)\, .
\end{eqnarray*}
\end{proof}

\begin{Lemma}
If a spiky function $\lambda$ is the connectivity function of a matroid $M$ then that matroid is a spike with legs the sets $L_i = \{x_i, y_i\}$.
\end{Lemma}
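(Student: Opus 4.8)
The plan is to verify directly that $M$ satisfies the three conditions in the definition of a rank-$n$ spike with legs $L_1,\ldots,L_n$: the ground set is $E_n$ (immediate), $E_n$ has rank $n$, and the union of any $k$ legs has rank $k+1$ for every $k$ with $1\leq k\leq n-1$. Write $r$ for the rank function of $M$ and set $\rho=r(E_n)$, so that $\mu_M(X)=r(X)+r(E_n-X)-\rho$, that is, $r(X)+r(E_n-X)=\rho+\mu_M(X)$. Since $\lambda$ is spiky it extends $\lambda_n$, so $\mu_M$ agrees with $\lambda_n$ on every subset of $E_n$ that is not a transversal; via the explicit formula for the rank function of a spike this pins down $\mu_M$ on all the sets we will use --- single elements, legs, unions of legs, and sets consisting of one full leg together with one element from each of several other legs --- each of which contains a full leg or has fewer than $n$ elements and hence is never a transversal. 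The recurring tool is a squeeze: from $r(X)+r(E_n-X)=\rho+\mu_M(X)$ together with the trivial bounds $r(X)\leq\min(|X|,\rho)$ and $r(E_n-X)\leq\rho$, knowing $\mu_M(X)$ often forces both $r(X)$ and $r(E_n-X)$.

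First I would prove $\rho=n$. Take $S=L_1\cup\{x_2,\ldots,x_{n-1}\}$; this has exactly $n$ elements, is not a transversal, and the spike rank formula gives $\lambda_n(S)=n$, so $\mu_M(S)=n$. Then $r(S)=\mu_M(S)+\rho-r(E_n-S)\geq n$ because $r(E_n-S)\leq\rho$, whence $\rho\geq r(S)\geq n$. The identical argument applied to the dual matroid $M^\ast$ --- whose connectivity function is again $\lambda$, since $\mu_{M^\ast}=\mu_M$, and hence again spiky --- yields $r_{M^\ast}(E_n)\geq n$, that is $2n-\rho\geq n$, so $\rho\leq n$. Hence $\rho=n$, which is condition (3).

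Next I would treat the unions of legs. For the lower bound, fix $j$ legs with $1\leq j\leq n-1$; their union contains a set $T$ made up of one of those legs together with one element from each of the other $j-1$ of them, and $T$ is not a transversal, has $j+1$ elements, and satisfies $\lambda_n(T)=j+1$ by the spike rank formula; so $\mu_M(T)=j+1$ and the squeeze forces $r(T)=j+1$, showing the union of the $j$ legs has rank at least $j+1$. For the matching upper bound, if $A$ is a union of $j$ legs then $E_n-A$ is the union of the remaining $n-j$ legs and the spike rank formula gives $\mu_M(A)=\lambda_n(A)=2$, so $r(A)=2+\rho-r(E_n-A)=(n+2)-r(E_n-A)\leq(n+2)-((n-j)+1)=j+1$ by the lower bound just applied to $E_n-A$. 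Thus every union of $j$ legs has rank exactly $j+1$, which is condition (2), and $M$ is a rank-$n$ spike with legs $L_1,\ldots,L_n$.

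The step I expect to be the main obstacle is the equality $\rho=n$: the easy estimates give only $n\leq\rho\leq n+1$ (the upper bound coming from a non-transversal set of size $n-1$ on which $\mu_M$ equals its cardinality), and excluding $\rho=n+1$ needs a further idea. The clean way around this is to notice that the inequality ``$r_N(E_n)\geq n$ for every matroid $N$ on $E_n$ with spiky connectivity function'' can be applied to $M$ and to $M^\ast$ at once, using the self-duality $\mu_M=\mu_{M^\ast}$. (One could instead analyse $\rho=n+1$ directly --- in that case every transversal is forced to be a hyperplane of $M$, hence every transversal is a cocircuit, and a contradiction can be extracted from circuit--cocircuit orthogonality --- but the duality argument is considerably shorter.)
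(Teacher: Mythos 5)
Your proof is correct and takes essentially the same route as the paper's: both pin down $\mu_M$ on non-transversal sets using $\lambda_n$, use a set consisting of one full leg together with one element from each of several other legs as the small set whose rank is forced to equal its cardinality, and then squeeze the rank of a union of legs between that lower bound and the complementary bound. The paper phrases the squeeze via $r(X)+r^*(X)=|X|+\mu_M(X)$ rather than your $r(X)+r(E_n-X)=r(E_n)+\mu_M(X)$, which also absorbs your explicit appeal to duality in establishing $r(E_n)=n$; the two formulations are equivalent.
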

\begin{proof}
We need to show that the union of any $k$ legs has rank $k+1$ for all $k\in\{1,\ldots,n-1\}$, and the rank of the union of $n$ legs is equal to $n$. First we shall look at the rank of $k$ legs for $k<n$, and without loss of generality we may take those legs to be $L_1\ldots L_k$.  Since $\lambda$ is spiky we have $\lambda(L_1\cup\cdots\cup L_k)= (k+1) + (n-k + 1) - n = 2$. Therefore, $r_M(L_1\cup\cdots\cup L_k)+r_M^*(L_1\cup\cdots\cup L_k)=2k+2$. 

Similarly we know that $\lambda(L_1\cup\{z_2,\ldots,z_k\})=k+1$ and so since $L_1\cup\{z_2,\ldots,z_k\}$ only has $k+1$ elements we must have $r_M(L_1\cup\{z_2,\ldots,z_k\})=k+1=r_M^*(L_1\cup\{z_2,\ldots,z_k\})$ and so $r_M(L_1\cup\cdots\cup L_k)\geq k+1$ and $r_M^*(L_1\cup\cdots\cup L_k)\geq k+1$. As $r_M(L_1\cup\cdots\cup L_k)+r_M^*(L_1\cup\cdots\cup L_k)=2k+2$, it must be that $r_M(L_1\cup\cdots\cup L_k)=k+1$.

The proof that $r_M(L_1\cup\cdots\cup L_n)=n$ is similar. 

We have now shown that $M$ satisfies the definition of a spike.
\end{proof}

\begin{Corollary}\label{char}
A spiky function $\lambda$ is the connectivity function of a matroid if and only if there is some independent set $I$ of $H_n$ such that for any $X \in \mathcal T_n$ we have $\lambda(X) = n - |I \cap \{X, E_n - X\}|$. \nobreak\hfill$\square$\medskip
\end{Corollary}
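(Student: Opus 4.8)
The statement to prove is Corollary~\ref{char}, which characterises when a spiky function is the connectivity function of a matroid. The plan is to combine the two preceding lemmas with the explicit description of spike connectivity functions given in the setup, essentially just unwinding definitions.

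First I would prove the forward direction. Suppose the spiky function $\lambda$ is the connectivity function of a matroid $M$. By the immediately preceding lemma, $M$ is a rank-$n$ spike with legs the sets $L_i$. By the discussion following Theorem~\ref{circuits of a spike}, every rank-$n$ spike is of the form $S(I)$ for some independent set $I$ of $H_n$, so $M = S(I)$ for some such $I$. Then, again by the formula recorded in the setup, the connectivity function of $S(I)$ on a transversal $X \in \mathcal{T}_n$ is exactly $\lambda_{S(I)}(X) = n - |I \cap \{X, E_n - X\}|$, and since $\lambda = \mu_M = \lambda_{S(I)}$ this gives the required identity for all $X \in \mathcal{T}_n$.

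For the converse, suppose there is an independent set $I$ of $H_n$ with $\lambda(X) = n - |I \cap \{X, E_n - X\}|$ for all $X \in \mathcal{T}_n$. Consider the matroid $M := S(I)$, which exists and is a rank-$n$ spike because $I$ is independent in $H_n$. I claim $\lambda = \mu_M$. On $2^{E_n} - \mathcal{T}_n$, both functions agree: $\lambda$ extends $\lambda_n$ by the definition of spiky, and $\mu_M$ restricted to $2^{E_n}-\mathcal{T}_n$ is exactly $\lambda_n$ since only the ranks of transversals depend on $I$. On $\mathcal{T}_n$, the hypothesis says $\lambda(X) = n - |I \cap \{X, E_n - X\}| = \lambda_{S(I)}(X) = \mu_M(X)$. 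Hence $\lambda = \mu_M$ is the connectivity function of the matroid $M$.

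Since both directions are now straightforward consequences of the two lemmas and the explicit rank/connectivity formulas already established, there is no real obstacle here; the only point requiring a moment's care is making sure that ``every rank-$n$ spike arises as $S(I)$ for some independent $I$'' — but this is precisely the reformulation of Theorem~\ref{circuits of a spike} stated in the paragraph following it (the condition that no two members of $\mathcal{C}_2$ differ in exactly one element is exactly independence in $H_n$). So the proof is essentially a citation of the preceding lemma plus an unwinding of the connectivity formula for $S(I)$.
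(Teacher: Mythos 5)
Your proposal is correct and is exactly the argument the paper intends: the corollary is left as an immediate consequence of the two preceding lemmas together with the formula $\lambda_{S(I)}(X) = n - |I \cap \{X, E_n - X\}|$, which is precisely what you unwind in both directions. Nothing is missing.
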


This may be turned into a yet more useful characterisation. If $\lambda$ is a spiky function on $E_n$ then let $G_{\lambda}$ be the induced subgraph of $H_n$ on the vertices $X$ with $\lambda(X) = n-1$.

\begin{Lemma}\label{char2}
If $n$ is odd then every spiky function on $E_n$ is the connectivity function of a matroid. If $n$ is even then a spiky function $\lambda$ on $E_n$ is the connectivity function of a matroid if and only if there is no transversal $X$ with $X$ and $E_n - X$ in the same component of $G_{\lambda}$.
\end{Lemma}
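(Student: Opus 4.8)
The plan is to derive this from Corollary~\ref{char} by translating the existence of a suitable independent set into a purely combinatorial question about the hypercube $H_n$ and the fixed-point-free involution $\sigma\colon X\mapsto E_n-X$ of its vertex set $\mathcal T_n$.

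First I would reformulate Corollary~\ref{char}. For a transversal $X$ write $c(X)=n-\lambda(X)$; since $\lambda$ is spiky, $c(X)\in\{0,1,2\}$, $c$ is constant on each antipodal pair $\{X,E_n-X\}$ (by symmetry of $\lambda$), and the spiky inequality $\lambda(X)+\lambda(Y)\ge 2n-2$ for adjacent transversals $X,Y$ becomes $c(X)+c(Y)\le 2$. An independent set $I$ of $H_n$ realises $\lambda$ in the sense of Corollary~\ref{char} precisely when $|I\cap\{X,E_n-X\}|=c(X)$ for every transversal $X$; so $I$ must contain both members of every pair with $c=2$, neither member of any pair with $c=0$, and exactly one member of each pair with $c=1$. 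The inequality $c(X)+c(Y)\le2$ shows that a vertex with $c=2$ is adjacent to no vertex with $c\ge1$; in particular the vertices with $c=2$ form an independent set with no edges to the rest. Hence such an $I$ exists if and only if the set of vertices with $c=1$ — which is exactly $V(G_\lambda)$ — admits what I will call an \emph{independent transversal}: an independent subset of $G_\lambda$ meeting each antipodal pair contained in $V(G_\lambda)$ in exactly one vertex. (Given one, adjoining all the $c=2$ vertices yields an independent set realising $\lambda$.) It therefore suffices to decide when $G_\lambda$ has an independent transversal.

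Next I would record the structure of $\sigma$. It is an automorphism of $H_n$; it has no fixed vertex, and no fixed edge (an edge and its image would have to join two antipodal vertices, which lie at distance $n\ge3$); and, since $\lambda$ is symmetric, $V(G_\lambda)$ is $\sigma$-invariant, so $\sigma$ restricts to an automorphism of $G_\lambda$. The hypercube $H_n$ is bipartite, one colour class $A$ and the other $B$ being determined by the parity of the number of coordinates in which a transversal differs from a fixed base transversal; applying $\sigma$ changes all $n$ coordinates, hence preserves this parity when $n$ is even and reverses it when $n$ is odd, so $\sigma$ fixes each of $A,B$ setwise when $n$ is even and interchanges $A$ and $B$ when $n$ is odd. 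Finally, a component $C$ of $G_\lambda$ contains an antipodal pair if and only if $\sigma(C)=C$: if $v\in C$ then $\sigma(v)\in\sigma(C)$, so $\sigma(C)=C$ forces $\{v,\sigma(v)\}\subseteq C$, while $\sigma(C)\cap C\neq\emptyset$ forces $\sigma(C)=C$ (as $\sigma$ permutes the components).

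Finally I would assemble the three cases. If $n$ is odd, each antipodal pair has one vertex in $A$ and one in $B$, so picking from each pair the vertex in $A$ gives an independent transversal of $G_\lambda$ (it lies inside the independent set $A$); by the reduction $\lambda$ is matroidal, giving the first assertion. Now suppose $n$ is even. If some component $C$ of $G_\lambda$ contains an antipodal pair then $\sigma(C)=C$, and I claim $G_\lambda$ has no independent transversal: were $J$ one, then $J\cap C$ and $\sigma(J\cap C)$ would be disjoint independent subsets of $C$ (disjoint because $\sigma$ swaps the two members of each pair) with union $V(C)$, so, $C$ being connected and bipartite, $\{J\cap C,\sigma(J\cap C)\}$ would be the pair of colour classes of $C$; but for $n$ even $\sigma$ fixes each colour class of $H_n$ setwise, forcing $\sigma(J\cap C)=J\cap C$, which is absurd since $V(C)\neq\emptyset$. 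Conversely, if no component of $G_\lambda$ contains an antipodal pair, then $\sigma$ pairs the components into couples $\{C,\sigma(C)\}$ of distinct components, between which there are no edges; for each couple the set $(A\cap C)\cup(B\cap\sigma(C))$ is independent — it is the union of a subset of $A$ and a subset of $B$ lying in distinct components — and, since $\sigma$ fixes $A$ and $B$ setwise, it meets each antipodal pair of the couple in exactly one vertex; the union of these sets over all couples, together with all $c=2$ vertices, realises $\lambda$, so $\lambda$ is matroidal. The delicate point is the impossibility direction for $n$ even: the crux is recognising that an independent transversal would force a proper $2$-colouring of every $\sigma$-invariant component, which is incompatible with $\sigma$ preserving the colour classes of $H_n$.
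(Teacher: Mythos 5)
Your proof is correct and follows essentially the same route as the paper's: your colour classes $A$ and $B$ are the paper's even- and odd-parity transversals, your observation that the complementation involution $\sigma$ preserves or swaps them according to the parity of $n$ is the paper's parity computation, and your unique-bipartition argument for the impossibility direction is the paper's induction on distance from $X$ in different clothing. The up-front reformulation of Corollary~\ref{char} in terms of independent transversals of $G_{\lambda}$ (after checking that the $c=2$ vertices are isolated from everything by the spiky inequality) is a tidy packaging, but not a genuinely different idea.
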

\begin{proof}
We say that a transversal $X$ has {\em even} parity if $|\{i | x_i \in X\}|$ is even, and {\em odd} parity otherwise.

Suppose first of all that $n$ is odd. Let $\lambda$ be a spiky function on $E_n$. Let $I$ be the set of transversals $X$ such that either $\lambda(X) = n-2$ or else $X$ has even parity and $\lambda(X) = n-1$. Then $I$ cannot contain 2 transversals $X$ and $Y$ which differ in just one element; since no 2 sets of even parity differ in just one element we would have to have $\lambda(X) = n-2$ or $\lambda(Y) = n-2$, giving $\lambda(X) + \lambda(Y) < 2n-2$, which is forbidden by the definition of spiky functions. So $I$ is an independent set in $H_n$. Since for any $X$ precisely one of $X$ and $E_n - X$ has even parity, $\lambda$ is the connectivity function of the matroid $S(I)$.

Now suppose that $n$ is even, and that there is no transversal $X$ with $X$ and $E_n - X$ in the same component of $G_{\lambda}$. For any set $K$ of transversals, we write $\overline K$ for $\{E_n - X | X \in K\}$. Let $\mathcal K$ be a set of components of $G_{\lambda}$ containing precisely one of $K$ and $\overline K$ for any component $K$ of $G_{\lambda}$. Let $I$ be the set of transversals $X$ such that $\lambda(X) = 2$ or $X$ has even parity and is in $\bigcup \mathcal K$ or $X$ has odd parity and $E_n - X$ is in $\bigcup \mathcal K$. Then as in the last case $I$ is an independent set in $H_n$. Since in this case the parity of $X$ is always the same as that of $E_n - X$, once again $\lambda$ is the connectivity function of the matroid $S(I)$. 

Finally we consider the case that $n$ is even and there is some transversal $X$ with $X$ and $E_n - X$ in the same component of $G_{\lambda}$. Suppose for a contradiction that $\lambda$ is the connectivity function of a matroid, and let $I$ be as in Corollary \ref{char}. Then $|I \cap \{X, E_n - X\}| = 1$. Without loss of generality $X$ is in $I$. For any vertex $Y$ of $G_{\lambda}$ we similarly have that precisely one of $Y$ and $E-Y$ is in $I$, and for neighbouring vertices $Y$ and $Y'$ these choices must be different; we cannot have both $Y$ and $Y'$ in $I$ nor both $E_n - Y$ and $E_n - Y'$ in $I$ since $I$ is independent. Thus we may prove by induction on the distance from $X$ that any vertex $Y$ in the same component of $G_{\lambda}$ as $X$ is in $I$ if and only if it has the same parity as $X$. But since $E_n - X$ has the same parity as $X$, this implies that $E_n - X$ is also in $I$, contradicting the fact that $|I \cap \{X, E_n - X\}| = 1$.
\end{proof}

If $W$ is a set of vertices of $H_n$ which is closed under complementation then we can define a spiky function $\lambda_W$ on $E_n$ by letting $\lambda_W$ be $n-1$ on elements of $W$ and $n$ on all other transversals. Then we have $G_{\lambda_W} = H_n[W]$ and so $\lambda_W$ is the connectivity function of a matroid if and only if there is no transversal $X$ with $X$ and $E_n - X$ in the same component of $H_n[W]$. This gives a reduction of the problem of recognising whether a connectivity function is the connectivity function of a matroid to the problem of recognising whether a set $W$ of vertices of $H_n$ which is closed under complementation has an element $X$ such that $X$ and $E_n - X$ are in the same component of $H_n[W]$, which we will now exploit to show that the question of whether a connectivity function is matroidal cannot be answered positively or negatively in a polynomial number of evaluations of the connectivity function. 

Suppose that $n = 2m$ and let $V_{<m}$, $V_m$ and $V_{>m}$ be the sets of transversals $X$ such that the number of $i \leq n$ with $x_i \in X$ is, respectively, less than, equal to, or greater than $m$. Then $H_n[V_{<m} \cup V_{>m}]$ has two components, $H_n[V_{<m}]$ and $H_n[V_{>m}]$, so by Lemma \ref{char2} $\lambda(V_{<m} \cup V_{>m})$ is the connectivity function of a matroid. For any subset $A$ of $2^{E_n}$ of size less than ${n \choose m}/2$ there is some $X \in V_m$ with neither $X$ nor $E_n - X$ in $A$. But then $\lambda(V_{<m} \cup V_{>m} \cup \{X, E-X\})$ agrees with $\lambda(V_{<m} \cup V_{>m})$ on $A$ and is not the connectivity function of any matroid since $H_n[V_{<m} \cup V_{>m} \cup \{X, E-X\}]$ is connected. Since the function sending $m$ to ${2m \choose m}/2$ grows faster than any polynomial, the problem of recognising whether a connectivity function is the connectivity function of a matroid cannot be solved in a polynomial number of evaluations of the connectivity function.

Our strategy for showing that we cannot, in a polynomial number of evaluations of the connectivity function, tell when a connectivity function is not the connectivity function of a matroid, will be similar; it relies on the notion of a buffered path in $H_n$. For $X_1,X_2\subseteq E_n$ we say $X_1$ and $X_2$ are {\em neighbours} if $V(X_1)\cap V(X_2)\neq \emptyset$.

\begin{definition}
We say that a path $X_0, X_1, \ldots X_k$ is {\em buffered} if $X_k = E_n - X_0$ and the only pairs $i, j$ such that $X_i$ a neighbour of $X_j$ or $E_n - X_j$ are those with $j = i \pm 1$ or $\{i,j\} = \{1,k\}$ or $\{i, j\} = \{0, k-1\}$. We say that $k$ is the {\em length} of the buffered path.
\end{definition}

If $m$ is a natural number, we define $f(m)$ to be $\frac{2^{m+1} + 2}3$ if $m$ is odd and $\frac{2^{m+1} + 4}3$ if $m$ is even. Thus $f(1) = 2$, $f(m+1) = 2f(m)$ if $m$ is odd and $f(m+1) = 2f(m) - 2$ if $m$ is even. In particular, $f(m)$ is always a natural number, which is 2 modulo 4 if $m$ is odd and 0 modulo 4 if $m$ is even.

\begin{Lemma}
For any $m$ the graph $H_{2m}$ contains a buffered path of length $f(m)$ from $\{x_1, \ldots x_{2m}\}$ to $\{y_1, \ldots y_{2m}\}$. 
\end{Lemma}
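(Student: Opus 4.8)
The plan is to prove the lemma by induction on $m$. For the base case $m=1$ one checks directly that the path $\{x_1,x_2\},\ \{y_1,x_2\},\ \{y_1,y_2\}$ has length $f(1)=2$, runs between $\{x_1,x_2\}$ and its complement $\{y_1,y_2\}=E_2-\{x_1,x_2\}$, and is buffered, the only non-consecutive pair $\{0,2\}$ causing no trouble since $\{x_1,x_2\}$ is a neighbour neither of $\{y_1,y_2\}$ nor of $E_2-\{y_1,y_2\}=\{x_1,x_2\}$.

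For the inductive step, suppose $P=X_0,X_1,\dots,X_k$ with $k=f(m)$ is a buffered path in $H_{2m}$ from $\{x_1,\dots,x_{2m}\}$ to $\{y_1,\dots,y_{2m}\}$. Fixing one element from each of the two new legs $L_{2m+1},L_{2m+2}$ cuts $H_{2m+2}$ into four subcubes, each a copy of $H_{2m}$. The plan is to build the required path out of two copies of $P$ --- or of its vertexwise complement, which is again a buffered path between the all-$x$ and all-$y$ transversals of $H_{2m}$ --- each copy sitting inside one of these subcubes and joined to the other by short bridge segments passing through the two new legs. The two subcubes, the orientations of the copies, and the positions of the bridges are chosen so that the result is genuinely a path (the two copies lie in distinct subcubes and so are automatically vertex-disjoint, and the few bridge vertices can be kept distinct from everything else) that starts at $\{x_1,\dots,x_{2m+2}\}$ and ends at $\{y_1,\dots,y_{2m+2}\}$; since these are complementary, the endpoint requirement of the definition is then automatic. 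Two copies of $P$ give the leading term $2f(m)$ of the length; when $m$ is even a final bridge step can be absorbed into the end of the second copy, trimming two steps and giving $f(m+1)=2f(m)-2$, whereas when $m$ is odd this saving is blocked, essentially because $f(m)\equiv 2\pmod 4$, leaving $f(m+1)=2f(m)$. This is exactly the role of the congruence recorded just before the lemma.

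The real content, and the only genuine obstacle, is to verify that the path so built is buffered: that apart from consecutive pairs and the two pairs $\{1,k'\}$ and $\{0,k'-1\}$, where $k'=f(m+1)$ --- which for any path between complementary transversals are forced neighbour pairs, since vertex $1$ is adjacent to vertex $0=E_{2m+2}-(\text{vertex }k')$, and symmetrically --- no two vertices $Y,Y'$ of the path have $Y$ a neighbour of $Y'$ or of $E_{2m+2}-Y'$. It helps to recast the condition: being buffered in $H_{2m}$ says precisely that every non-consecutive, non-sanctioned pair of vertices has Hamming distance not in $\{1,2m-1\}$, while in $H_{2m+2}$ the forbidden distances become $\{1,2m+1\}$; note also that the two sanctioned pairs of $P$ have distance exactly $2m-1$. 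The pairs to check in the new path then fall into three kinds. If both vertices lie in one copy of $P$, they agree on the new legs, so their distance equals the old one, which is at most $2m$ (hence never $2m+1$) and, by the inductive hypothesis together with the previous remark (the distance $2m-1$ of a sanctioned pair of $P$ lies in $\{1,2m+1\}$ only when $m=1$), never $1$. If the two vertices lie in different copies, they differ on at least one new leg, and a short computation keeps their distance away from $1$; the one subtlety is the images of the sanctioned pairs of $P$, which have distance $2m-1$ and are therefore pushed to $2m+1$ exactly when the two vertices differ on \emph{both} new legs --- and it is precisely to control this that the subcubes, orientations and bridges must be chosen as prescribed, so that those images land on the sanctioned pairs of the new path. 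Finally the handful of remaining pairs, those near the two endpoints and those involving bridge vertices, are checked by hand; this is where the two exceptions $\{1,k'\}$ and $\{0,k'-1\}$ are consumed, and carrying the even/odd distinction through it is the fiddly part of the argument.
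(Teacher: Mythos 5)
Your base case is fine, but the inductive step is a plan rather than a proof: the subcubes, orientations and bridge positions are never specified, and the verification that the resulting path is buffered --- which you yourself identify as ``the real content'' --- is deferred to an unperformed check ``by hand''. Worse, the architecture you propose cannot be made to work. A copy of $P$ (or of its vertexwise complement, or either one reversed) lying in a single subcube runs between the all-$x$ and the all-$y$ vertex of that subcube, so it flips each of the $2m$ old coordinates an odd number of times; two such copies therefore flip each old coordinate an even number of times, and bridges that move only on the two new legs flip the old coordinates not at all. But a walk from $\{x_1,\ldots,x_{2m+2}\}$ to $\{y_1,\ldots,y_{2m+2}\}$ must flip every coordinate an odd number of times, so no path of the shape ``two copies in two subcubes joined by short bridges through the new legs'' can have the required endpoints. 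The same problem shows up in your length count: two full copies already contribute $2f(m)$ edges and every bridge edge adds to that, yet the target lengths are $2f(m)$ and $2f(m)-2$; trimming a vertex or two, as you suggest for even $m$, cannot repair a parity defect that is present in all $2m$ old coordinates simultaneously.

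The paper's construction avoids this by not using two disjoint copies at all: it traverses $P$ once, at half speed, and interleaves that traversal with flips of the two new coordinates, so that the path oscillates among three of the four subcubes (from $\{x_{2m+1},x_{2m+2}\}$ through $\{x_{2m+1},y_{2m+2}\}$ to $\{y_{2m+1},y_{2m+2}\}$ and back) while its projection to $H_{2m}$ walks along $P$. Each old coordinate then flips exactly as often as it does in $P$, the explicit formula indexed modulo $8$ gives length exactly $2f(m)$, and the congruence class of $f(m)$ modulo $4$ determines whether the oscillation lands in the $\{y_{2m+1},y_{2m+2}\}$ subcube after $2f(m)$ or after $2f(m)-2$ steps --- this is where the two cases of the recursion for $f$ really come from, not from absorbing a bridge step. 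Your reformulation of bufferedness in terms of forbidden Hamming distances $1$ and $2m-1$ is a sensible way to organise the final verification, but it cannot substitute for having a concrete path on which to perform it. To repair the argument you would need to replace the two-copy architecture by something like this interleaved traversal (or otherwise arrange for each old coordinate to flip an odd number of times) and then actually carry out the pair-by-pair check.
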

\begin{proof}
By induction on $m$. The case $m = 1$ is trivial, since $f(1) = 2$ and any path of length 2 from $\{x_1, x_2\}$ to $\{y_1, y_2\}$ is buffered. For the induction step, suppose we have such a buffered path $X_0, \ldots X_{f(n/2)}$ in $H_n$. For $0 \leq i \leq f(m+1)$ we set
$$X'_i := \begin{cases}
X_{\frac i2} \cup \{x_{2m+1}, x_{2m+2}\}& \text{if $i$ is congruent to 0 modulo 8} \\
X_{\frac {i-1}2+1} \cup \{x_{2m+1}, x_{2m+2}\}& \text{if $i$ is congruent to 1 modulo 8} \\
X_{\frac {i-2}2 + 2} \cup \{x_{2m+1}, x_{2m+2}\}& \text{if $i$ is congruent to 2 modulo 8} \\
X_{\frac {i-3}2 + 2} \cup \{x_{2m+1}, y_{2m+2}\}& \text{if $i$ is congruent to 3 modulo 8} \\
X_{\frac {i-4}2 + 2} \cup \{y_{2m+1}, y_{2m+2}\}& \text{if $i$ is congruent to 4 modulo 8} \\
X_{\frac {i-5}2 + 3} \cup \{y_{2m+1}, y_{2m+2}\}& \text{if $i$ is congruent to 5 modulo 8} \\
X_{\frac {i-6}2 + 4} \cup \{y_{2m+1}, y_{2m+2}\}& \text{if $i$ is congruent to 6 modulo 8} \\
X_{\frac {i-7}2 + 4} \cup \{x_{2m+1}, y_{2m+2}\}& \text{if $i$ is congruent to 7 modulo 8} \\
\end{cases}$$
The only tricky part in showing that this gives a buffered path from $\{x_1, \ldots x_{2m+2}\}$ to $\{y_1, \ldots y_{2m+2}\}$ is showing that it has the correct endvertex. If $m$ is odd then $f(m)$ is 2 modulo 4 and so $f(m+1) = 2f(m)$ is 4 modulo 8. Thus $X'_{f(m+1)} = X_{\frac{2f(m) - 4}2 + 2} \cup \{y_{2m + 1}, y_{2m + 2}\} = X_{f(m)} \cup  \{y_{2m + 1}, y_{2m + 2}\} = \{y_1 \ldots y_{2m}\} \cup \{y_{2m + 1}, y_{2m + 2}\} = \{y_1, \ldots y_{2m+2}\}$. Similarly if $m$ is even then $f(m)$ is 0 modulo 4 and so $f(m+1) = 2f(m)-2$ is 6 modulo 8. Thus $X'_{f(m+1)} = X_{\frac{2f(m) -2 - 6}2 + 4} \cup \{y_{2m + 1}, y_{2m + 2}\} = X_{f(m)} \cup  \{y_{2m + 1}, y_{2m + 2}\} = \{y_1 \ldots y_{2m}\} \cup \{y_{2m + 1}, y_{2m + 2}\} = \{y_1, \ldots y_{2m+2}\}$. 
\end{proof}

Now let $X_0 \ldots X_{f(m)}$ be any buffered path from $\{x_1, \ldots x_{2m}\}$ to $\{y_1, \ldots y_{2m}\}$. Let $W$ be the set of transversals $X$ such that $X$ or $E_n - X$ appears on this path. Then $H_n[W]$ is connected so by Lemma \ref{char2} $\lambda(W)$ is not the connectivity function of any matroid. For any set $A$ of size less than $f(m) /2$, there must be some $X \in W$ such that neither $X$ nor $E_n - X$ is in $A$. But then $\lambda(W -\{X, E_n - X\})$ agrees with $\lambda(W)$ on $A$ and is the connectivity function of a matroid. This completes our proof that the problem of recognising whether a connectivity function is not the connectivity function of a matroid cannot be solved in a polynomial number of evaluations of the connectivity function.

\end{document}